\newtheorem{thm}{Theorem}
\newtheorem{lem}[thm]{Lemma}
\newtheorem{prop}[thm]{Proposition}
\newcommand{\N}{\ensuremath{\mathbb N}}
\newcommand{\C}{\ensuremath{\mathbb{C}}}
\newcommand{\cF}{\mathcal F}
\newcommand{\E}[1]{\ensuremath{\mathbf{E} \left[#1 \right]}}
\newcommand{\Prob}[1]{\ensuremath{\mathbf{P} \left(#1 \right)}}
\newcommand{\I}[1]{\ensuremath{\mathbf{1}_{ \{ #1 \} }}}
\begin{document}

\title{\bf  On martingale tail sums for the path length in random trees
}
\author{ Henning Sulzbach\thanks{School of Computer Science, McGill University, 3480 University Street, Montreal, Canada, H3A 0E9. \newline
Email: henning.sulzbach@gmail.com \newline
This work was supported by a Feodor Lynen Research Fellowship from the Alexander
von Humboldt Foundation.
}
} 

\maketitle

\begin{abstract}
For a martingale $(X_n)$ converging almost surely to a random variable $X$, the sequence $(X_n - X)$ is called martingale tail sum. 
Recently, Neininger [\emph{Random Structures Algorithms}, 46 (2015), 346-361] proved a central limit theorem for the martingale tail sum of R{\'e}gnier's martingale for the path length in random binary search trees.  Gr{\"u}bel and Kabluchko [\emph{to appear in Annals of Applied Probability}, (2016)]
gave an alternative proof also conjecturing a corresponding law of the iterated logarithm.
We prove the central limit theorem with convergence of higher moments and the law of the iterated logarithm for a family of trees containing binary search trees, recursive trees and plane-oriented recursive trees. 
 \end{abstract}

\noindent
{\em  AMS 2010 subject classifications.} Primary 60F15, 68P05; secondary  60F05, 60G42.\\
{\em Key words.} Martingale central limit theorems, law of the iterated logarithm, random trees

\section{Introduction and main results}
In a finite rooted tree, the path length denotes the sum of the depths of all nodes. Here, the depth of a node equals its graph distance to the root. 
When the underlying tree is random, one is interested in its
average behavior,  in normal and large deviations, as well as in distributional (or almost sure) limit theorems as the size, that is the number of nodes, tends to infinity. 
In this paper we consider random trees whose heights grow logarithmically through their sizes, prominent examples are the binary search tree (BST), the recursive tree (RT) and the plane-oriented recursive tree (PORT).   
Trees of this kind are of prime importance in computer science where they serve as data structures and play fundamental roles in the analysis of algorithms, see, e.g.\ Mahmoud's book \cite{mahmoudbook} for a general account. Further, 
PORTs belong to the family of graphs of preferential attachment type, an important class of graphs in modeling  phenomena in real-world networks. 

\medskip \textsc{Results in the literature.} 
The path length $P_n$ in a binary search tree of size $n$ was first investigated by Hoare \cite{hoare} in his seminal paper on the quicksort algorithm.  Under the common probabilistic model (see the next paragraph for details), he obtained an exact expression for the mean of $P_n$. 
The variance of $P_n$ was calculated by Knuth \cite{Knuth1973b}. R{\'e}gnier \cite{regnier} showed that the sequence $(P_n - \E{P_n})/(n+1)$  is an $L_2$-bounded martingale, hence almost surely convergent. Shortly after, R{\"o}sler \cite{Ro91} invented the so-called contraction method to obtain the same convergence result (on a distributional level), further characterizing the limiting distribution as the solution to a stochastic fixed-point equation. 
In RTs and PORTs, the martingale approach for the path length was worked out by Mahmoud \cite{mahrec,mahorient}. Dobrow and Fill \cite{DobFill99} obtained analogues of R{\"o}sler's result in RTs. In BSTs, Fill and Janson \cite{fillja} and Neininger and R{\"u}schendorf \cite{neirueexact} investigated distances between the distribution of the rescaled quantity and its limit with respect to several probability metrics.
Neininger and R{\"u}schendorf \cite{neiruesplit} formulated a distributional limit theorem  for the class of split trees introduced by Devroye \cite{devsplit} assuming a suitable expansion of the mean of $P_n$. In the special case of higher-dimensional binary search trees, so-called quad trees, this expansion had already been proved by Flajolet et al. \cite{flhyper}. In the general case, it was established recently by Broutin and Holmgren \cite{Brholm} and independently by Munsonius \cite{Munso}.
Finally, for the family of weighted $b$-ary trees introduced by Broutin and Devroye \cite{DeBrweighted}, the path length was analyzed by Munsonius and R{\"u}schendorf \cite{Munso2}. 
In recent years, deeper results on the profile of random trees, that is on the number of nodes on a given level (i.e.\ of a given depth), have been obtained. These results also lead to limit theorems for the path length. We refer to Section 6 in Drmota's book \cite{drmotabook} for an overview of this development. We exploit the connection between the profile and the path length in our work and give further references in this context in the discussion of \eqref{defWW} and in Section \ref{sec:poly}.

\medskip \textsc{Probabilistic model.}  We consider the one-parametric family of linear recursive trees introduced by Pittel \cite{Pit94}. Let $\beta \in [0, \infty)$. A sequence of rooted trees $(T_n)_{n \geq 1}$ where $T_n$ has size $n$ is constructed as follows: The tree $T_1$ consists of a single node, the root. For $n \geq 2$, given a tree $T_{n-1}$ of size $n-1$, a node $v \in T_{n-1}$ is chosen with probability proportional to $\beta d_v + 1$. Here, $d_v$ denotes the number of children (i.e.\ the outdegree) of $v$. Then, $T_n$ is obtained by connecting a new node to $v$.
Moreover, we consider so-called $m$-ary trees constructed in the same way, where a new node is connected to an existing node $v$ with probability proportional to $m - d_v$, $1 < m \in \N$. (Note that, by construction, $d_v \leq m$ for all nodes $v$ at all times.) In order to simplify notation, we cover both models by writing $\beta d_v + m$ for the probability that $v$ is chosen as the parent node distinguishing the cases that either $\beta \in [0, \infty), m = 1$ or $\beta = -1, 1 < m \in  \N$. The table below shows important tree models which are covered by this setting.

Note that, if $\beta$ is integer-valued, then, to each node $v \in T_n$ with depth $k \geq 0$, one can associate $\beta d_v + m$ so-called \emph{external} nodes on level $k+1$.
The transition from $T_{n-1}$ to $T_n$ corresponds to replacing a randomly chosen external node in $T_{n-1}$  by an (\emph{internal}) node. \begin{center}
\begin{tabular}{l|c |c}
Tree model & $\beta$ & $m$ \\
\hline
Recursive tree & $0$ &  $1$ \\
Plane-oriented recursive tree & $1$ & $1$ \\
$p$-oriented tree, $p \in \N$ & $p$ & $1$ \\
Binary search tree & $-1$ & $2$ \\
$m$-ary tree, $1 < m  \in \N$ & $-1$ & $m$
\end{tabular}
\end{center}

\medskip \textsc{Mean and variance.} Before describing the main purpose of our work, we shortly discuss mean and variance of the path length. As a by-product of our analysis, compare \eqref{ding4} and the discussion of \eqref{ding3} below, we obtain the following asymptotic expansions, 
\begin{align} \label{exp:mean}
\E{P_n} = \frac{m}{\beta + m} n \log n -  \frac{m}{\beta + m} \left(1 + \psi^{(0)} \left(\frac{m} {\beta + m}\right)\right) n + O(\log n), \end{align}
and 
\begin{align} \label{exactvar} \text{Var}(P_n) =  \left( 1 + \frac{m}{\beta + m}\left( 1 - \frac{m}{\beta + m} \psi^{(1)} \left(\frac{m} {\beta + m}\right)\right) \right) n^2 + o(n^2). \end{align}
Here $\psi^{(0)} = (\log \Gamma)', \psi^{(1)} = (\log \Gamma)'',$ denote the digamma and trigamma functions. For BSTs \cite{hoare, Knuth1973b, regnier, Ro91}, RTs \cite{mahrec, DobFill99} and PORTs \cite{mahorient}, these results are classical. For all remaining cases of integer-valued $\beta$, \eqref{exp:mean} was obtained in \cite{Munso2}. For $\beta = - 1, 2 < m \in \N$ or $\beta \in \N_0, m = 1$, the order of the variance was known \cite{neiruesplit, Brholm, Munso, Munso2}, and the leading constant can be computed from the results in these works. Both expansions are novel for non-integer $\beta$.

\medskip \textsc{Object of study.} Our work is primarily concerned with the martingale approach relying on the sequence 
\begin{align} \label{defSn}
S_n := \frac{P_n - \E{P_n}}{n - \beta / (\beta + m)}. 
\end{align}
In Section \ref{sec:pre}, see also Theorem \ref{thm_main}, we verify the martingale property of $S_n$ which constitutes the starting point of our analysis. By \eqref{exactvar}, $S_n$ converges almost surely and in $L_2$ to a random variable $S$. It is natural to investigate the behavior of the martingale tail sum $S - S_n$. 
In random BSTs, an explicit formulae for its second moment was derived by Bindjeme and Fill \cite{bindfill}. Neininger \cite{ralphrefined} was first to prove asymptotic normality of $S_n - S$ when properly rescaled: in distribution, as $n \to \infty$, 
\begin{align} \label{CLT}
\sqrt{\frac{n}{2\log n}} (S_n - S) \to \mathcal{N}. \end{align}
Here, and throughout the work, we denote by $\mathcal N$ a generic random variable with the standard normal distribution.
A proof of this result based on the method of moments was worked out shortly after by Fuchs \cite{fuchs}.  Very recently, Gr{\"u}bel and Kabluchko \cite{grza} proved more general functional limit theorems in branching random walks containing \eqref{CLT} for a stronger mode of convergence  \cite[Section 5.5.1]{grza}. Moreover, they conjectured a corresponding law of the iterated logarithm. 

The purpose of this paper is to obtain Gaussian limit laws of type \eqref{CLT}  and laws of the iterated logarithm for trees in the model introduced above.

\medskip \textsc{Main results.}
Our main result is the following theorem. Here, and in the remaining of the paper, we define  $S_0 := 0$, $\mathcal{F}_0 := \{ \emptyset, \Omega\}$ and, for $n \geq 1$, $\mathcal{F}_n = \sigma(T_1, \ldots, T_n)$.

\begin{thm} \label{thm_main}
Let either $\beta \in [0, \infty), m = 1$ or $\beta = -1$ and $1 < m \in \N$.  Then, $S_n$ defined in \eqref{defSn} is an $L_2$-bounded martingale with respect to the filtration $(\cF_n)$. Moreover, denoting  by $S$ the almost sure limit of $S_n$, in distribution, 
\begin{align} \label{CLTtheorem}
\sqrt{\frac{\beta + m}{m}} \sqrt{\frac{n}{\log n }}  (S_n - S) & \to \mathcal{N}.
\end{align}
Almost surely, 
\begin{align*}
\limsup_{n \to \infty} \sqrt{\frac{\beta + m}{2m}} \sqrt{\frac{n}{\log n \log \log n}}  (S_n - S) & = 1, \\
\liminf_{n \to \infty} \sqrt{\frac{\beta + m}{2m}} \sqrt{\frac{n}{\log n \log \log n}}  (S_n - S) & = -1. 
\end{align*}
\end{thm}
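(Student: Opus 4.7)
The plan is to work with the increments of $S_n$ directly. If $D_n$ denotes the depth of the node added at time $n$ and $Z_{n-1} := (\beta+m)(n-1) - \beta$, then $P_n - P_{n-1} = D_n$ and the new node's parent is chosen among $v \in T_{n-1}$ with probability $(\beta d_v + m)/Z_{n-1}$. The combinatorial identity $\sum_{v \in T_k} d_v f(\text{depth}(v)) = \sum_{v \neq \text{root}} f(\text{depth}(v) - 1)$, applied with $f(x) = x$ and $f(x) = x^2$, gives explicit formulas for $\E{D_n \,|\, \mathcal{F}_{n-1}}$ and $\E{D_n^2 \,|\, \mathcal{F}_{n-1}}$ in terms of $P_{n-1}$ and the second depth moment $P_{n-1}^{(2)} := \sum_{v \in T_{n-1}} \text{depth}(v)^2$. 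The martingale property of $S_n$ follows by a direct calculation; $L^2$-boundedness is \eqref{exactvar}; and writing $a_n := n - \beta/(\beta+m)$, the conditional increment variance takes the form
\begin{align*}
\sigma_n^2 := \E{(S_n - S_{n-1})^2 \,|\, \mathcal{F}_{n-1}} = \frac{1}{a_n^2}\left( \frac{P_{n-1}^{(2)}}{n-1} - \Big(\frac{P_{n-1}}{n-1}\Big)^{\! 2} \right) + O(n^{-2}),
\end{align*}
which is, up to lower-order terms, the conditional variance of the depth of a weighted-random vertex of $T_{n-1}$.

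\textbf{Reduction to a profile expansion.}
Set $V_n := \sum_{k > n} \sigma_k^2$. The martingale central limit theorem and the law of the iterated logarithm for martingale tail sums (Heyde; see e.g.\ Hall and Heyde, \emph{Martingale Limit Theory and Its Application}) deliver \eqref{CLTtheorem} and the claimed LIL once one establishes (a) the almost-sure asymptotic $V_n \sim \mu \log n/n$ with $\mu := m/(\beta+m)$, and (b) a conditional Lindeberg condition. For (b) it suffices to note that $|S_n - S_{n-1}| = O((D_n + \log n)/n)$, where $D_n$ is bounded by the height of $T_n$, itself a.s.\ $O(\log n)$ for every tree model in the table. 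Hence $\max_{k > n} |S_k - S_{k-1}| = O(\log n/n) = o(\sqrt{V_n})$ a.s., which trivializes both the Lindeberg condition and the bounded-increment hypothesis in Heyde's LIL.

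\textbf{The main obstacle: second-order profile asymptotics.}
Condition (a) is the technical heart of the argument. Inserting the formula for $\sigma_n^2$ into $V_n$ and using $\sum_{k > n} (\log k)/k^2 \sim (\log n)/n$, condition (a) is equivalent to the almost-sure expansion
\begin{align*}
\frac{P_n^{(2)}}{n} \;=\; \mu^2 (\log n)^2 \,+\, \mu \log n \,+\, o(\log n),
\end{align*}
i.e.\ the depth of a uniform random vertex has both mean and variance asymptotic to $\mu \log n$ almost surely. I plan to obtain this from the polynomial martingale theory for the profile of $T_n$: for $z$ in a suitable neighborhood of $1$ in $\mathbb C$, the process $W_n(z) := \phi_n(z)^{-1} \sum_{v \in T_n} z^{\text{depth}(v)}$, with $\phi_n(z)$ a deterministic product of gamma-function ratios, is an $\mathcal F_n$-martingale, uniformly $L^2$-bounded on compact subsets of a strip around $\mathrm{Re}(z) = 1$. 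Almost sure convergence of $W_n(z)$ together with its first two $z$-derivatives at $z = 1$, extracted by a Cauchy-type argument from the $L^2$ bounds, then produces the required a.s.\ asymptotics of $P_n$ and $P_n^{(2)}$. This step should interface directly with the apparatus of Section~\ref{sec:poly} and the limit object $W$ introduced near \eqref{defWW}; verifying the precise second-derivative a.s.\ behavior uniformly across the full range of parameters $(\beta, m)$ is where most of the work lies.

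\textbf{Assembly.}
With (a) and (b) in place, Heyde's tail CLT gives $(S_n - S)/\sqrt{V_n} \to \mathcal N$ in distribution, and the tail LIL gives $\limsup_n (S_n - S)/\sqrt{2 V_n \log \log V_n^{-1}} = 1$ a.s.\ (and $-1$ for the liminf). Substituting $V_n \sim \mu \log n/n$ and $\log \log V_n^{-1} \sim \log \log n$ converts these into \eqref{CLTtheorem} and the normalization $\sqrt{2 \mu \log n \log \log n / n}$ stated in the theorem.
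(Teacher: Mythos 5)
Your overall strategy agrees with the paper's: verify the martingale property of $S_n$ from the conditional law of $D_n$, invoke Heyde's tail-sum CLT and LIL, and reduce the conditional-variance normalization to almost-sure second-order information about the profile, obtained from the profile-polynomial martingale. That part is sound and is essentially what the paper does (via $M_n''(1)$ and equation \eqref{conn}). However, two steps as written contain genuine gaps.

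First, the Lindeberg/moment conditions are not ``trivialized'' by the a.s.\ bounded-increment observation. Heyde's conditions (the paper's \textbf{C1}, \textbf{L1}, \textbf{L2}, and the a.s.\ convergence in \textbf{L3}) involve \emph{unconditional expectations} of the increments on rare events, whereas the statement ``height of $T_n$ is a.s.\ $O(\log n)$'' is only an asymptotic a.s.\ bound: there is a random $n_0(\omega)$ past which $D_n \le C\log n$, but for each fixed $n$ the event $\{D_n > C\log n\}$ has positive probability and the increment can be large there. To control the quantities in \textbf{C1} and \textbf{L1} you need quantitative tail bounds on $D_n$ (the paper uses Bernstein's inequality \eqref{taildepth}, \eqref{tail2}) together with moment bounds on $S_n$ (Proposition \ref{prop1}, leading to \eqref{bound_moments}); these are not consequences of an a.s.\ height bound. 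Your argument would leave all of \textbf{C1}, \textbf{L1}, \textbf{L2}, \textbf{L3} unproved.

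Second, the claimed reduction of condition (a) to $P_n^{(2)}/n = \mu^2(\log n)^2 + \mu\log n + o(\log n)$ a.s.\ misses a random cross-term. Since $P_n/n = \mu\log n + c + S_n + o(1)$ a.s.\ with $S_n\to S$ nondegenerate, one has $(P_n/n)^2 = \mu^2(\log n)^2 + 2\mu(\log n)(c+S_n) + O(1)$, and the $\Theta(\log n)$ random term must cancel against a matching term in $P_n^{(2)}/n$; your stated expansion would actually give a random variance $\mu\log n - 2\mu(\log n)(c+S) + o(\log n)$, not $\mu\log n$. The paper's formulation \eqref{conn}, which expresses $\alpha_n^2\E{X_n^2\mid\cF_{n-1}}$ as $(\beta+m)^2\bigl(\mathrm{Var}(D_n)+M_{n-1}''(1)+S_{n-1}-S_{n-1}^2\bigr)$ with a deterministic leading term and a.s.\ convergent lower-order terms, is designed precisely to make this cancellation automatic; working directly with $P_n^{(2)}$ requires tracking it by hand. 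You should reformulate (a) in terms of $P_n^{(2)}/n - (P_n/n)^2$, or better, adopt the normalized second derivative $M_n''(1)$ as the paper does and invoke uniform a.s.\ convergence of $M_n(z)$ near $z=1$ (Proposition \ref{prop2}) together with Weierstrass's theorem.
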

The following result about convergence of higher moments in \eqref{CLTtheorem} presumably holds for all trees in our model. However, our proof below relies on the fact that $\beta$ is integer-valued.
\begin{thm} \label{thm:moments}
Let either $\beta \in \N_0, m = 1$ or $\beta = -1, 1 < m \in \N$. Then, the convergence in \eqref{CLTtheorem} is with respect to all moments. In other words, for $p  > 0$ and as $n \to \infty$, 
$$\left( \frac{n}{\log n}\right)^{p/2} \E{|S_n - S|^p} \to \left( \frac{m}{\beta + m}\right)^{p/2} \E{ |\mathcal{N}|^p}.$$
\end{thm}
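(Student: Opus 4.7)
\textbf{Proof plan for Theorem \ref{thm:moments}.} Since Theorem \ref{thm_main} already provides the distributional convergence in \eqref{CLTtheorem}, convergence of $p$-th absolute moments reduces to uniform integrability of $|(n/\log n)^{p/2}(S_n-S)|^{p}$. The plan is therefore to establish, for every $p>0$, a bound of the form
\begin{equation*}
\sup_{n\geq 2}\,\Bigl(\frac{n}{\log n}\Bigr)^{q/2}\E{|S_n-S|^{q}}<\infty
\quad\text{for some }q>p.
\end{equation*}
Combined with the distributional convergence, this upgrades it to the desired convergence of $p$-th moments, the limiting constants being those of the standard normal distribution multiplied by $(m/(\beta+m))^{p/2}$.

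To obtain the uniform $L^q$ bound I would apply the Burkholder--Davis--Gundy inequality to the martingale tail sum $S-S_n=\sum_{k>n}\Delta S_k$ (with $\Delta S_k:=S_k-S_{k-1}$):
\begin{equation*}
\E{|S_n-S|^{q}}\;\leq\; C_q\,\E{\Bigl(\sum_{k>n}(\Delta S_k)^{2}\Bigr)^{q/2}}.
\end{equation*}
A direct computation from \eqref{defSn} expresses $\Delta S_{k+1}$ explicitly in terms of $S_k$ and the depth $D^{*}_{k+1}$ of the node inserted at step $k+1$, yielding pointwise $|\Delta S_{k+1}|\leq C(D^{*}_{k+1}+|S_k|+1)/k$. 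Since $S_k$ is $L^2$-bounded and $D^{*}_{k+1}$ has all moments of order $O((\log k)^{r})$, one gets sharp $L^{q}$ bounds on the individual increments.

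The integer-valued hypothesis on $\beta$ enters crucially at the next step, when one wants to pass from bounds on individual increments to a bound on the $q/2$-norm of their sum of squares. In the integer case the dynamics can be recast through the external-node description given in the excerpt (a generalised P{\'o}lya urn, equivalently a Markov branching process embedded in continuous time \`a la Athreya--Karlin), and under this embedding all polynomial moments of the level profile and of the depth of the inserted node are uniformly controllable. Concretely, one shows
\begin{equation*}
\E{\Bigl(\sum_{k>n}(\Delta S_k)^{2}\Bigr)^{q/2}}\;\leq\; C_q\Bigl(\frac{\log n}{n}\Bigr)^{q/2}
\end{equation*}
by first conditioning on $\mathcal{F}_k$ to replace $(\Delta S_{k+1})^{2}$ by its conditional variance (which, up to a $O(1/k^2)$ error, is given by a profile functional whose moments are tractable via the urn representation), and then invoking a Rosenthal-type inequality to handle the residual non-conditional part.

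The main obstacle is the last display: squared martingale increments are far from independent, so $L^{q/2}$ control of their sum is not a routine consequence of individual $L^q$ bounds. Making this work requires identifying the dominant contribution to the quadratic variation as a quantity of the form $\sum_\ell \ell^{2}X_{n,\ell}^{2}/n^{2}$ where $X_{n,\ell}$ is the external-node profile at level $\ell$, and then invoking the polynomial moment bounds on the profile that are available precisely when $\beta\in\N_0$ or $(\beta,m)=(-1,m)$ with $m>1$. Once this is in place, the CLT from Theorem \ref{thm_main} plus the resulting uniform integrability concludes the proof for all real $p>0$.
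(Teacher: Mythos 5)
Your plan and the paper's proof rest on the same technical pillars but arrive there by different routes. The paper invokes Hall's theorem on convergence of moments in martingale central limit theorems (packaged as conditions \textbf{P1}--\textbf{P3} of Proposition \ref{prop:mart}) and reduces the verification of those conditions --- $L^p$-boundedness of $S_n$ and of the normalized conditional quadratic variation --- to Proposition \ref{prop3}, the uniform $L^p$-boundedness of the profile polynomial $M_n(z)$ and its derivatives near $z=1$. That proposition is proved by embedding $T_n$ into a continuous-time branching random walk and appealing to moment bounds for Biggins' martingale; it is there, and nowhere else, that the integer-$\beta$ hypothesis enters. Your route --- distributional CLT plus uniform integrability, with the $L^q$-bound supplied by Burkholder--Davis--Gundy and a Rosenthal-type estimate --- is a more hands-on alternative and does not need Hall's theorem as a black box, which is an honest trade-off. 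But there is a step where your sketch is too optimistic: writing $\sum_{k>n}(\Delta S_k)^2 = A_n + B_n$ with the predictable part $A_n=\sum_{k>n}\E{(\Delta S_k)^2\,|\,\mathcal{F}_{k-1}}$ and a martingale remainder $B_n$, your "sharp $L^q$ bounds on the individual increments" fed into a triangle/Minkowski estimate only give $\|B_n\|_{q/2}=O\bigl(n^{2\varepsilon/q-1}\bigr)$, which is \emph{not} $O(\log n/n)$ for any $\varepsilon>0$; a second martingale inequality on $B_n$ (another BDG, or martingale Rosenthal, exactly as your parenthetical suggests but must actually carry out) is needed to push the exponent below $-1$. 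The remaining, genuinely essential, ingredient is $L^p$-control of $A_n$, and via the identity \eqref{conn} this boils down to $L^p$-bounds on $S_k$ and on $M_k''(1)$, i.e.\ to Proposition \ref{prop3}. You correctly identify the urn/continuous-time embedding as the source of these bounds and as the reason for restricting $\beta$, but the precise object to control is $M_n(z)$ in a complex neighborhood of $1$ (with Cauchy's formula transferring to derivatives), not raw profile counts of the form $\sum_\ell \ell^2 X_{n,\ell}^2/n^2$; the latter is neither the quantity that appears nor one for which the needed moment bounds are readily available without passing through the generating function.
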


Both theorems follow from general results in the context of martingale limit theorems. 

\medskip \textsc{A martingale limit theorem.} In the following theorem and in the remaining of the paper, we say that a sequence of real-valued random variables $Y_n, n \geq 0,$ is bounded in $L_p$, $p> 0$, if $\sup_{n \geq 0} \E{|Y_n|^p} < \infty$.
\begin{prop} \label{prop:mart}
Let $Z_n, n \geq 0,$ be a zero-mean, $L_2$-bounded martingale with respect to a filtration $\mathcal{G}_n, n \geq 0$. For $n \geq 1$, let $\Delta_n = Z_n - Z_{n-1}$ and $s_n^2 = \sum_{i=n}^\infty \E{\Delta_i^2}$. Assume that $s^2_n > 0$ for all $n$ and denote by $Z$ the almost sure limit of $Z_n$.
 If
\begin{itemize}
\item [\textbf{C1.}] $s_n^{-2} \sum_{i=n}^\infty  \E{\Delta_i^2 \I{|\Delta_i| \geq \varepsilon s_n}} \to 0$ for all $\varepsilon > 0$, and
\item [\textbf{C2.}] $s_n^{-2} \sum_{i = n}^\infty \E{\Delta_i^2 | \mathcal{G}_{i-1}} \to 1$ in probability, 
\end{itemize}
then \begin{align} \label{conv1} s_n^{-1} (Z_{n-1} - Z) \to \mathcal {N} \end{align} in distribution. If
\begin{itemize}
\item [\textbf{L1.}] $\sum_{i=1}^\infty s_i^{-1} \E{|\Delta_i| \I{|\Delta_i| \geq \varepsilon s_i}} < \infty$ for all $\varepsilon > 0$,
\item [\textbf{L2.}] $\sum_{i=1}^\infty s_i^{-4} \E{\Delta_i^4 \I{|\Delta_i| \leq \delta s_i}} < \infty$ for some $\delta > 0$,
\item [\textbf{L3.}]  $\sum_{i=1}^\infty s_i^{-2}(\Delta_i^2 - \E{\Delta_i^2| \mathcal{G}_{i-1}})$ converges almost surely, and
\item [\textbf{L4.}]  $s_n^{-2} \sum_{i = n}^\infty \E{\Delta_i^2 | \mathcal{G}_{i-1}} \to 1$ almost surely,
\end{itemize}
then, almost surely,
\begin{align*}
\limsup_{n \to \infty}  \frac{Z_{n-1} - Z}{s_n  \sqrt{2 \log \log s_n^{-1}}}  = 1, \quad \liminf_{n \to \infty} \frac{Z_{n-1} - Z}{s_n  \sqrt{2 \log \log s_n^{-1}}}  = -1. 
\end{align*}
Finally, if, for all $p \in \N$ sufficiently large,  
\begin{itemize}
\item [\textbf{P1.}] $Z_n$ is bounded in $L_p$,
\item [\textbf{P2.}] $s_n^{-2p} \sum_{i=n}^\infty  \E{\Delta_i^{2p}} \to 0$,
\item [\textbf{P3.}] $s_n^{-2} \sum_{i = n}^\infty \E{\Delta_i^2 | \mathcal{G}_{i-1}}$ is bounded in $L_p$, 
\end{itemize}
and  \textbf{C1}, \textbf{C2} hold, then, the convergence in \eqref{conv1} is with respect to all moments.
\end{prop}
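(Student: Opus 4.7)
The structure of the proof separates into three independent tasks, one for each of the three conclusions. In each case I would reduce the statement to a classical martingale limit theorem applied to the tail sum $W_n := Z - Z_{n-1} = \sum_{i \geq n} \Delta_i$; note that, by $L_2$-boundedness, $\E{W_n^2} = s_n^2$, $W_n \to 0$ almost surely and in $L_2$, and $k \mapsto \sum_{i=n}^{n+k-1} \Delta_i$ is a martingale with respect to $(\mathcal{G}_{n+k-1})_{k \geq 1}$ converging to $W_n$.

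\medskip \emph{Central limit theorem.} For fixed $n$, the rescaled increments $X_{n,k} := \Delta_{n+k-1}/s_n$, $k \geq 1$, form a martingale difference array with respect to $(\mathcal{G}_{n+k-1})$. Conditions \textbf{C1} and \textbf{C2} are precisely the Lindeberg and conditional-variance hypotheses of the classical triangular-array martingale CLT. The only minor subtlety is that the rows are infinite series; this is handled by a standard truncation at a large index $N$, with the discarded tail bounded in $L^2$ by $s_{n+N}/s_n \leq 1$ uniformly in $n$ and made small by choosing $N$ large.

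\medskip \emph{Law of the iterated logarithm.} This is the main obstacle. I would invoke a martingale LIL in the spirit of Heyde (1977) and Stout (1970). The standard route is a truncation: decompose $\Delta_i = \Delta_i^{\flat} + \Delta_i^{\sharp}$ with $\Delta_i^{\flat} := \Delta_i \I{|\Delta_i| \leq \delta s_i}$ (recentered to preserve the martingale property). Condition \textbf{L1} together with Borel--Cantelli ensures that $\sum \Delta_i^{\sharp}/s_i$ converges almost surely, so the large jumps contribute only $o(s_n \sqrt{\log \log s_n^{-1}})$ to the tail. For the truncated part an exponential martingale inequality (Freedman- or Bernstein-type) is applied; this requires sharp almost-sure control of the quadratic variation. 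Condition \textbf{L4} gives $s_n^{-2}\sum_{i \geq n} \E{\Delta_i^2 |\mathcal{G}_{i-1}} \to 1$, and combining \textbf{L3} with an Abel summation (using $\sum_{i \geq n} |s_i^2 - s_{i-1}^2| = s_n^2$) promotes this to $s_n^{-2}\sum_{i \geq n} \Delta_i^2 \to 1$ almost surely; the fourth-moment bound \textbf{L2} plus Borel--Cantelli absorbs the remaining error terms. The upper bounds $\limsup \leq 1$ and $\liminf \geq -1$ then follow from a geometric-subsequence Borel--Cantelli argument on the exponential estimate, while the matching lower bounds require a Paley--Zygmund / second-moment argument on disjoint blocks chosen so that their partial sums are approximately independent. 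Constructing these blocks and verifying their near-independence is the technical heart of the argument.

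\medskip \emph{Convergence of moments.} Given \eqref{conv1}, it suffices to show that $\{|W_n/s_n|^p\}_n$ is uniformly integrable for every $p > 0$, which I would establish by proving $L_{2p}$-boundedness via a Burkholder--Rosenthal inequality: for $p \in \N$,
\begin{equation*}
\E{|W_n|^{2p}} \leq C_p \Bigl( \E{\sum_{i \geq n} \Delta_i^{2p}} + \E{\Bigl( \sum_{i \geq n} \E{\Delta_i^2 | \mathcal{G}_{i-1}}\Bigr)^p}\Bigr).
\end{equation*}
Condition \textbf{P1} supplies enough integrability to pass the finite Rosenthal bound for $\sum_{i=n}^{n+k-1} \Delta_i$ to the limit $k \to \infty$. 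Dividing by $s_n^{2p}$, condition \textbf{P2} controls the first term and \textbf{P3} controls the second, giving $\sup_n \E{|W_n/s_n|^{2p}} < \infty$ for all sufficiently large $p \in \N$. Uniform integrability follows, and convergence of the $p$th moment for every $p > 0$ follows by H\"older interpolation.
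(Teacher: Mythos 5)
The paper does not reprove the CLT and LIL from scratch: it simply cites Heyde \cite{heyde77}, observing that Theorem~1(b), Corollary~1(b) and Corollary~2(b) there, applied to the reversed martingale, deliver both statements directly from \textbf{C1}--\textbf{C2} and \textbf{L1}--\textbf{L4}; and it derives moment convergence from Hall's theorem \cite{hall78b} by introducing a sequence $k_n\uparrow\infty$ (chosen via \textbf{P1} so that $s_{n+k_n+1}/s_{n+1}\to 0$ and $s_{n+1}^{-2p}\E{|Z_{n+k_n}-Z|^{2p}}\to 0$), applying Hall's finite-horizon result to the truncated array $Z_{n,i}$, and then passing to the limit. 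Your proposal takes a genuinely different path: you reconstruct the CLT/LIL machinery and, for the moments, replace Hall's theorem by a Burkholder--Rosenthal bound giving $\sup_n\E{|W_n/s_n|^{2p}}<\infty$ and hence uniform integrability. This last argument is correct, clean, and arguably more transparent than the paper's: \textbf{P2} bounds the $\Delta$-term, \textbf{P3} bounds the bracket term, Fatou passes the finite Rosenthal inequality to $W_n$, and uniform integrability together with \eqref{conv1} finishes. You do not even need the sophistication of constructing $k_n$.

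Two caveats. First, your CLT truncation step as written has a gap: for a fixed $N$ the discarded tail is \emph{not} small, since $s_{n+N}/s_n\to 1$ as $n\to\infty$ under any polynomial-type decay of $s_n$ (which is the regime of interest); one needs $N=N_n\to\infty$ chosen as a function of $n$ (exactly what the paper's $k_n$ does and what Heyde's time-reversal argument handles), together with a check that the Lindeberg and conditional-variance conditions survive the diagonal truncation. Second, your LIL sketch reconstructs the Heyde/Stout proof from scratch, which is legitimate, but you have explicitly deferred the block construction and the Paley--Zygmund lower bound, calling them ``the technical heart.'' As such, that part is a roadmap rather than a proof; the paper avoids this entirely by citing Heyde's Corollary 2(b). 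You should either complete the block argument or cite the corresponding result.
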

The central limit theorem and the law of the iterated logarithm in the proposition summarize Theorem 1(b), Corollary 1(b) and Corollary 2(b) in Heyde \cite{heyde77}.
The convergence of higher moments is an application of Theorem 1 in Hall \cite{hall78b}. To more precise, note that, for $p > 1$ sufficiently large, by \textbf{P1}, we can choose a sequence $k_n \uparrow \infty $ such that both $s_{n + k_{n} + 1} /s_{n+1} \to 0$ and $s_{n+1}^{-2p} \E{|Z_{n+ k_n} - Z|^{2p}} \to 0$. Further, we define $Z_{n,i} = (s_{n+1}^2 - s_{n + 1 + k_n}^2)^{-1/2} \sum_{j=1}^{i} \Delta_{n+j}$ and $ \mathcal{G}_{n,i} = \mathcal{G}_{n+i}$ for $n \geq 1, i \geq 0$. Then, $\{(Z_{n,i}, \mathcal{G}_{n,i} ), 0 \leq i \leq k_n\}$ is a zero-mean, square integrable martingale for all $n \geq 1$ and $\E{Z^2_{n,k_n}} = 1$. Moreover, conditions 
(4), (6), (9) in \cite[Theorem 1]{hall78b} follow from, in this order,  \textbf{C1},  \textbf{C2} and \textbf{P3}, \textbf{P2}. This theorem shows that $s_{n+1}^{-2p} \E{ \left| Z_{n+k_n} - Z_{n} \right| ^{2p}} \to \E{|\mathcal {N}|^{2p}}$ as $n \to \infty$. The claim now follows from our choice of $k_n$. 

In the literature about martingale limit theorems, results are often formulated in terms of an unconditional version of \textbf{L4} (or \textbf{C2}) : almost surely (in probability for \textbf{C2}),
\begin{align} \label{diffi}
s_n^{-2} \sum_{i = n}^\infty \Delta_i^2 \to 1.
\end{align}
The convergence \eqref{diffi} is at the very heart of Theorem 1 in \cite{heyde77}. It is easy to see and worked out in Lemma 1 in \cite{heyde77} that \textbf{L3} and \textbf{L4} imply \eqref{diffi}. 
In our work, as well as in the application given in \cite{heyde77} in the context of P{\'o}lya urns, it is considerably easier to verify conditions  \textbf{L3} and \textbf{L4} than establishing \eqref{diffi} directly.

Note the following improvements of the statements in Theorem \ref{thm_main}. First, by Theorem 1 in \cite{heyde77}, almost surely, the set of accumulation points of the sequence considered in the law of the iterated logarithm is $[-1, 1]$. 
Second, as usual in martingale central limit theorems, the convergence in \eqref{CLTtheorem} (or, more generally, in \eqref{conv1}) is mixing in the sense of R{\'e}nyi and R{\'e}v{\'e}sz \cite{renyi}: for a sequence of real-valued random variables $Y_n, n \geq 0,$ defined on a common probability space, and a random variable $Y$, we have $Y_n \to Y$ mixing, 
if for any random variable $X$ defined on the same probability space as $Y_n, n \geq 0,$ in distribution,
\begin{align*}
(Y_n, X) \to (Y,X^*) \quad  Y, X^* \: \text{independent}.
\end{align*} 
Here, $X^*$ has the same distribution as $X$.

\medskip \textsc{Tools in the proof - the profile polynomial.}
Among the conditions in  the martingale central limit theorem and the law of the iterated logarithm in Proposition \ref{prop:mart},  \textbf{L4} (or \textbf{C2}) is typically the hardest to verify. Here, we make use of the connection between the profile and the path length of the tree.
For $n, k \geq 0,$ we denote by $X_k(n)$ the number of nodes on level $k$ in $T_n$. We call $X_k(n), k \geq 0,$ the \emph{internal profile} of $T_n$. Given $\cF_{n}$, for $k \geq 0$, the probability that the $(n+1)$-st node is inserted on level $k$ is proportional to $U_k(n)$ defined by 
\begin{align} \label{defex}
U_k(n) = \begin{cases} \beta X_{k}(n) + m X_{k-1}(n) & 
\mbox{for }  n, k  \geq 1, \\
\I{k=n=0} & 
 \mbox{otherwise}.
\end{cases} \end{align}
We call $U_k(n), k \geq 0$, the \emph{external profile} of $T_n$, noting that, for $\beta$ integer, $U_k(n)$ counts the number of external nodes on level $k$ in $T_n$.
The profile polynomial $W_n(z)$ and its normalized version $M_n(z)$ were first introduced by Jabbour-Hattab in the BST \cite{jabbour},  
\begin{align}
W_n(z) = \sum_{k=0}^\infty U_k(n) z^k, \quad M_n(z) = \frac{W_n(z)}{\E{W_n(z)}}. \label{defWW}
\end{align}
Here, we let $z \in \C^+ = \{ z \in \C : \Re(z) > 0\}$. Note that, writing $W_n'(z) = \sum_{k=1}^\infty k U_k(n) z^{k-1}$ for the derivative of $W_n(z)$, we have  $W_n'(1) = (\beta + m)P_n + nm$ allowing to transfer results for the profile polynomial to the path length of $T_n$.
It is well-known that, for all $z \in \C^+$, the sequence $M_n(z)$ is a martingale with respect to the filtration $(\cF_n)$. The relevant term which has to be controlled in the verification of  \textbf{L4} turns out to be related to the second derivative $M_n''(1)$  (compare \eqref{conn} below), which converges by Weierstrass' convergence theorem for holomorphic functions upon verifying uniform almost sure convergence of $M_n(z)$ in a neighborhood of $z = 1$. This is the content of Proposition \ref{prop2}. Uniform almost sure convergence is related to uniform $L_2$-boundedness of $M_n(z)$ around $z=1$ which is known for all trees in our model. The generalization to uniform $L_p$-boundedness, $p > 2$, see Proposition \ref{prop3} below,  is at the core of Theorem  \ref{thm:moments} allowing for the verification of conditions \textbf{P1} and \textbf{P3}. It is this property which leads to the restriction to integer-valued $\beta$ in the theorem.

Uniform convergence of $M_n(z)$ also plays a decisive role in the analysis of both profiles $X_k(n), k \geq 0$ and $U_k(n), k \geq 0$ as $n \to \infty$ since the extraction of the coefficients of $W_n(z)$ is worked out with the help of Fourier's inversion formulae. We discuss relevant references in Section \ref{sec:poly}.

\section{Proof of Theorems \ref{thm_main} and \ref{thm:moments}}
\subsection{Preliminaries} \label{sec:pre}

We start with a basic result on $D_n$, defined as the depth of the $n$-th inserted node. In this context, we recall  the P{\'o}lya urn model with parameters $K \in \N, c_1, \ldots, c_K \in \N, (a_{i,j})_{1 \leq i, j \leq K}, a_{i,j} \in \N_0$. Here, starting with $c_i$ balls of color $i$, $1 \leq i \leq K$, in each step, one ball is drawn from the urn and replaced together with $a_{\ell, j}$ additional balls of each color $1 \leq j \leq K$, where $\ell$ denotes the color of the drawn ball. We say the urn has initial configuration $C = (c_1, \ldots, c_K)$ and replacement matrix $A = (a_{i,j})_{1 \leq i, j \leq K}$. For a general account on P{\'o}lya urns covering many of its applications in the analysis of 
random trees, we refer to Mahmoud's book \cite{Mah2008}.

The following lemma coincides with Theorem 7 in Dobrow and Smythe \cite{DoSm} in the case of integer-valued $\beta$. Here, and subsequently, for $n \geq 0$, we set 
\begin{align} \label{defa} \alpha_n :=  \sum_{k=0}^{\infty} U_k(n) = \begin{cases}(\beta + m) n - \beta
& \text{for }  n \geq 1, \\
1
&  \text{for }  n = 0.
\end{cases}. \end{align}
\begin{lem} For $1 \leq i < j$, denote by $A_{i,j}$ the event that the $i$-th inserted node is on the path from the root to the $j$-th inserted node. Then, for $n \geq 1$, the events
$A_{1,n}, \ldots, A_{n-1,n}$ are independent and $\Prob{A_{i,n}} = m/\alpha_i$.

\end{lem}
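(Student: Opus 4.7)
The plan is to derive both statements from a P\'olya urn representation for the subtree weights. For $i \geq 1$ and $n \geq i$, set $\sigma_i(n) := \sum_v (\beta d_v(n) + m)$, where the sum runs over the nodes in the subtree rooted at the $i$-th inserted node in $T_n$ (including that node itself). Since, given $\mathcal{F}_n$, the node inserted at step $n+1$ is attached to $v$ with probability $(\beta d_v(n) + m)/\alpha_n$, and since $A_{i,n+1}$ occurs exactly when the parent of the new node lies in the subtree of the $i$-th node, one has
\[
\Prob{A_{i,n+1} \mid \mathcal{F}_n} = \frac{\sigma_i(n)}{\alpha_n}.
\]
If such an attachment happens, $\sigma_i$ increases by $\beta+m$ (namely $m$ from the new node plus $\beta$ from the incremented outdegree of its in-subtree parent); otherwise it does not change. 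Since $\alpha_n$ grows by the same deterministic amount $\beta+m$ at each step, the pair $(\sigma_i(n), \alpha_n - \sigma_i(n))_{n \geq i}$ evolves as a balanced two-color P\'olya urn with deterministic initial composition $(m,\alpha_i - m)$, and $(\sigma_i(n)/\alpha_n)_{n \geq i}$ is an $(\mathcal{F}_n)$-martingale.

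Because this urn starts from a deterministic initial configuration and the total weight $\alpha_n$ is deterministic, its law is independent of the structure of $T_i$. Therefore $\Prob{A_{i,n} \mid \mathcal{F}_i}$ is actually a \emph{constant}, and the martingale property forces
\[
\Prob{A_{i,n} \mid \mathcal{F}_i} \;=\; \E{\sigma_i(n-1)/\alpha_{n-1}} \;=\; \frac{\sigma_i(i)}{\alpha_i} \;=\; \frac{m}{\alpha_i},
\]
which yields the marginal formula after taking expectations.

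For joint independence I will exploit the combinatorial identity $A_{i,n} \cap A_{j,n} = A_{i,j} \cap A_{j,n}$ for $1 \leq i < j < n$, which holds because two proper ancestors of $n$ inserted in a prescribed temporal order must themselves be in an ancestor-descendant relation. Iterating, for any $i_1 < i_2 < \cdots < i_k < n$,
\[
A_{i_1,n} \cap \cdots \cap A_{i_k,n} \;=\; A_{i_1,i_k} \cap \cdots \cap A_{i_{k-1},i_k} \cap A_{i_k,n}.
\]
The first $k-1$ events are $\mathcal{F}_{i_k}$-measurable, while the last satisfies $\Prob{A_{i_k,n} \mid \mathcal{F}_{i_k}} = m/\alpha_{i_k}$ by the previous paragraph. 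Taking the conditional expectation given $\mathcal{F}_{i_k}$ and inducting on $k$ (the remaining intersection is the corresponding statement with $n$ replaced by $i_k$), one arrives at $\Prob{\bigcap_{\ell=1}^k A_{i_\ell, n}} = \prod_{\ell=1}^k m/\alpha_{i_\ell}$, which is precisely mutual independence of the Bernoulli indicators $\mathbf{1}_{A_{i,n}}$.

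The main point to watch is the passage from ``correct conditional mean'' to ``deterministic conditional probability'' for $\Prob{A_{i,n} \mid \mathcal{F}_i}$: merely knowing that $\sigma_i(n)/\alpha_n$ is a martingale with the right starting value gives the marginal, but only the additional observation that the urn's transition rule does not see $T_i$ at all — a consequence of the balanced dynamics and the deterministic starting weight $m$ of node $i$ — lets the ancestor identity collapse the joint probability into a product. Once this is in place, everything else is formal.
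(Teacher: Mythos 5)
Your proof is correct and follows the paper's strategy almost step for step: identify the subtree weight $\sigma_i(n)$ with a balanced two-color P\'olya urn started from the deterministic configuration $(m,\alpha_i-m)$, deduce that $\Prob{A_{j,n}\mid\mathcal F_j}=m/\alpha_j$ is a constant, and collapse the joint probability through the ancestor-path identity $A_{i,n}\cap A_{j,n}=A_{i,j}\cap A_{j,n}$ together with an induction on the number of events. The one genuine, if small, streamlining in your version is that $\sigma_i(n)=\sum_v(\beta d_v+m)$ is built directly from the attachment weights, so the urn description and the resulting deterministic conditional probability apply uniformly for every $\beta\in[0,\infty)$ as well as $\beta=-1$; the paper first argues through external-node counts (which requires integer $\beta$) and only then observes that the explicit formula for the conditional law of the indicator pattern given $\mathcal F_i$ extends to non-integer $\beta$. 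Your Markov-chain phrasing (deterministic start $\sigma_i(i)=m$, transition kernel depending only on $\sigma_i(n)$ and $n$) is a clean way to see why $\Prob{A_{i,n}\mid\mathcal F_i}$ is nonrandom, where the paper instead exhibits the closed-form exchangeable-sequence probability and reads off the same fact; both routes deliver exactly the ingredient the induction needs.
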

\begin{proof}
Let us first assume $\beta \in \N_0, m = 1$ or $\beta = -1, 1 < m \in \N$. Then, the number of external nodes in the subtree rooted at the $i$-th inserted node grows like the number of balls of color one in a P{\'o}lya urn
with $K=2$, initial configuration $C = (m, \alpha_i - m)$ and replacement matrix $A = ((\beta + m) \I{i =j})_{i,j = 1, 2}$.
In particular, the event $A_{i,j}$ for $j > i$ corresponds to drawing a ball of color one in the $(j-i)$-th step. It is well-known that the family of events $\{A_{i,j}, j > i \}$ is $i)$ independent of $\mathcal F_{i}$, and $ii)$ exchangeable. More precisely, for $0 \leq \ell \leq k$, and $\varepsilon_{i,j} \in \{0,1\}, i+1 \leq j \leq i+k,$ with $\sum_{j = i+1}^{i+k} \varepsilon_{i,j} = \ell$, the conditional probability
$$\Prob{\mathbf{1}_{A_{i, j}} = \varepsilon_{i, j} \ \text{for all} \ i+1 \leq j \leq i+ k \big | \mathcal F_{i}} = \frac{ \prod_{j = 1}^\ell (j (\beta + m) -\beta) \prod_{j = 1}^{\ell-k} 
(i + j -2) (\beta + m)}  {\prod_{j = 0}^{k-1} \alpha_{i + j}}$$
is deterministic and depends only on $\ell$ and $k$. The last expression and therefore $i)$ and $ii)$ remain true for arbitrary $\beta \in [0, \infty)$, $m = 1$.
In particular, for any set of parameters $\beta, m$ in our model, we have $\Prob{A_{i,j}} = m / \alpha_i$ for $j > i$. 
By the construction of the tree, for $1 \leq i < j \leq n-1$, the conditional probability of the event $A_{j,n}$ given $\mathcal F_{n-1}$ only depends on $N_{n-1}(j)$ defined as the size of the subtree rooted at the $j$-th inserted node at time $n-1$. Since, $N_{n-1}(j) = \sum_{k = j}^{n-1} \mathbf{1}_{A_{j,k}}$, using $i)$, $N_{n-1}(j)$ is independent of $\mathcal F_j$ and the same follows for $A_{j,n}$. In particular, the events $A_{i,j}$ and $A_{j,n}$ are independent.
It follows that $$\Prob{A_{i,n} \cap A_{j,n}} = \Prob{A_{i,j} \cap A_{j,n}} = \frac{m^2} {\alpha_i \alpha_j} = \Prob{A_{i,n}}\Prob{A_{j,n}}.$$
Hence, $A_{i,n}$ and $A_{j,n}$ are independent events. The case of more than two events follows from the same argument by a simple induction. 
\end{proof}
It follows that, with independent Bernoulli random variables $B_1, \ldots, B_{n-1}$ with $\E{B_i} = m/\alpha_i$, 
\begin{align}
 D_n  \stackrel{d}{=} \sum_{i=1}^{n-1} B_i, \quad \E{D_n}  =  \frac{m}{\beta + m} \log n +  O(1), \quad \text{Var}(D_n)  =  \frac{m}{\beta + m} \log n + O(1). \label{ding2} 
\end{align}
Since $P_n = \sum_{k=1}^n D_k$, using the first identity in the last display, we deduce
\begin{align} \label{ding4}
 \E{P_n}  = \frac{m \alpha_n}{\beta + m} \sum_{i = 1}^{n-1} \frac{1}{\alpha_i} - \frac{m(n-1)}{\beta + m}. 
\end{align}
The expansion \eqref{exp:mean} now follows from the facts that $\psi^{(0)}(x+1) =  \psi^{(0)}(x) +1/x, x > 0$ and $\psi^{(0)}(x) = \log x + O(1/x)$ as $x \to \infty$.
  
Below, we need a bound on the tails of $P_n - \E{P_n}$ as $n \to \infty$. To this end, we make use of a Chernoff-type bound for $D_n$. 
By an application of Bernstein's inequality for sums of uniformly bounded and independent random variables, for all $n \geq 1$, we have
\begin{align}
\Prob{|D_n - \E {D_n}| \geq t} \leq 2 e^{- t^2 / (2 \E{D_n} + t)}, \quad t > 0. \label{taildepth}
\end{align}
For the path length, the following rough bound will be sufficient in the remaining, 
\begin{align}
\Prob{|P_n - \E{P_n}| \geq t } & \leq \Prob{ \bigcup_{k=1}^n \{|D_k - \E{D_k}| \geq t / n\}}  \nonumber \\
& \leq n \sup_{1 \leq k \leq n} \Prob{|D_k - \E {D_k}| \geq t/n} \leq 2 n  e^{- t^2 / (2n^2 \E{D_n} + tn)}. \label{tail2}
\end{align}
Here we have used the fact that the right hand side of \eqref{taildepth} is increasing in $n$ for any fixed $t > 0$.
Note that the right hand side of the last display decays polynomially when $t  \sim \gamma \E{P_n}, \gamma > \gamma_0$, where $\gamma_0 > 0$ depends only on $\beta$ and $m$. This is far from optimal at least for integer-valued $\beta$ where it is well-known that
the left-hand side of \eqref{tail2} decays faster than $n^{-k}$ for any $k > 0$ when $t \sim \gamma \E{P_n}$ for any fixed $\gamma > 0$. (For a proof of this claim, we refer to \cite[Corollary 4.3] {Ro91} for the BST, to \cite[Theorem 5.1]{neiruesplit} and \cite[Corollary 1.2]{Munso} for $\beta = -1, 1 < m \in \N$, and to \cite[Theorem 5.6]{Munso2} for $\beta \in \N_0, m = 1$ from which the assertion can be deduced by checking the conditions formulated in the proof of Lemma 4.3 in \cite{neiruesplit}.) The best large deviation results for $P_n$ in the BST were proved by McDiarmid and Hayward \cite{MCDHAY}.

We turn to the external profile $U_k(n)$ and the external path length defined by $E_n = \sum_{k=1}^n k U_k(n)$. For $\beta$ integer, $E_n$ equals the sum of the depths of the external nodes in $T_n$. 
From the construction of the tree it follows that, with $E_0 := 0$, 
\begin{align}
E_n  = E_{n-1} + (\beta  + m) D_n + m = (\beta + m)P_n + nm  , \quad \E{D_n | \cF_{n-1}} = \frac{E_{n-1}}{ \alpha_{n-1}}, \quad n \geq 1. \label{ding1} 
\end{align}
Using these two identities, one can check that
\begin{align}
\text{Var}(P_n) & = \text{Var}(P_{n-1}) \left(1 + \frac{2(\beta + m)} { \alpha_{n-1}} \right)+ \text{Var}(D_n) \nonumber \\
& = \sum_{i=1}^n \text{Var}(D_i) \prod_{j=i+1}^n \left(1 + \frac{2(\beta + m)} { \alpha_{j-1}} \right) \nonumber \\
& =  \left(n - \frac{\beta}{\beta + m} \right) \left(n + \frac{m}{\beta + m} \right) \sum_{i=1}^n \frac{\text{Var}(D_i)}{ (i - \beta / (\beta + m)) (i - \beta / (\beta + m) + 1)}. \label{ding3}
\end{align}
By the expansion of the variance in \eqref{ding2}, it immediately follows that the sequence $S_n$ is $L_2$-bounded. Using the last display together with the more precise first identity in \eqref{ding2}, a straightforward but lengthy calculation leads to \eqref{exactvar}.

We continue by collecting some immediate consequences for the sequence of increments $X_n :=  S_n - S_{n-1}$, $n \geq 1$.
Abbreviating $\mu_n := \E{E_n}$, by definition and \eqref{ding1}, 
\begin{align}
X_n & = \frac{E_n - \mu_n}{\alpha_n} - \frac{E_{n-1} - \mu_{n-1}}{ \alpha_{n-1}}  =   (\beta + m) \frac{ \alpha_{n-1} D_n - E_{n-1}}{\alpha_n \alpha_{n-1}} \nonumber \\
 & =  \frac{\beta + m}{\alpha_n}  \left(  D_n - \E{D_n} - S_{n-1}  \right).  \label{X1}
\end{align}
The martingale property of $S_n$ follows from the last line, since, by \eqref{ding1}, we have $\E{D_n - \E{D_n} | \cF_{n-1}} = S_{n-1}$. As $S_n$ is $L_2$-bounded there exists a random variable $S$ with $S_n \to S$ almost surely and in $L_2$. Using \eqref{X1}, \eqref{ding1} and \eqref{ding2}, we have
\begin{align*}
\E{X_n^2} &  =  \frac{(\beta + m)^2}{\alpha_n^2} \left( \text{Var}(D_n) -  \E{S^2}(1+(o(1)) \right) = \frac{m}{\beta + m} \frac{\log n}{n^2} +  O\left (\frac{1}{ n^2}\right).
\end{align*}
It follows 
\begin{align} \label{evaint}
s_n^2 = \sum_{i=n}^\infty \E{X_i^2} = \frac{m}{\beta + m} \int_{n}^\infty \frac{\log x}{x^2} dx + O \left ( \frac{1}{n} \right) = \frac{m}{\beta + m} \frac{\log n}{n} + O \left ( \frac{1}{n} \right), \end{align}
and \begin{align} \label{approx} s_n^{-2} = \frac{\beta + m}{m} \frac{n}{\log n} + O \left( \frac{n}{\log^2 n} \right). \end{align}

\subsection{The profile polynomial} \label{sec:poly}
Recall the definition of the profile polynomial $W_n(z)$ and its normalized version $M_n(z)$ in \eqref{defWW}. From the definition of the external profile \eqref{defex}, it follows that
$U_k(n)  = U_k(n-1) + \beta \mathbf{1}_{\{D_n = k\}} + m  \mathbf{1}_{\{D_n = k-1\}}$ for $n, k \geq 1$. Hence, 
$$\E{U_k(n) | \cF_{n-1}} = \frac{(\beta + \alpha_{n-1}) U_k(n-1) + m U_{k-1}(n-1)}{\alpha_{n-1}}, \quad n, k  \geq 1.$$
For $z \in \C^+$, we deduce $W_1(z) = mz$, and, for $n \geq 2$,
$$\E{W_n(z) | \cF_{n-1}} = \frac{\alpha_{n-1} + \beta + mz}{\alpha_{n-1}} W_{n-1}(z).$$
The martingale property of $M_n(z), z \in \C^+,$ follows readily. 
Since $W_n(z)$ is a polynomial of degree at most $n$ and $C_n(z)$ is a polynomial of degree $n$, both functions are holomorphic on $\C^+$. Indicating the first and second derivative of a complex function $f$ by $f'$ and $f''$, an application of the Differentiation lemma, see e.g.\ Klenke \cite[Theorem 6.28] {klenke}, shows that $C_n'(z) = \E{W_n'(z)}$ and $C_n''(z) = \E{W_n''(z)}$. From the definition of $W_n(z)$ in \eqref{defWW}, it follows immediately that   
\begin{align}
W_n(1) & = \alpha_n, \quad W_n'(1)  = E_n, \quad W_n''(1) = \alpha_n \E{D_{n+1}^2 | \mathcal{F}_n} - E_n, \label{wn1} \\
C_n(1) & = \alpha_n, \quad C_n'(1)  = \mu_n, \quad C_n''(1) = \alpha_n \E{D_{n+1}^2} - \mu_n.  \label{cn1}
\end{align}
Moreover, we have $S_n = M_n'(1)$.
\begin{prop} \label{prop2}
There exists a neighborhood of $z=1$ in the complex plane where, almost surely, $M_n(z)$ converges uniformly to a limit denoted by $M(z)$.
\end{prop}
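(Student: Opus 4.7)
The plan is to combine the uniform $L_2$-boundedness of $M_n(z)$ near $z=1$, noted above as known for all trees in the model, with Vitali's (Stieltjes--Vitali) theorem for holomorphic functions: a sequence of holomorphic functions on a domain that is locally uniformly bounded and converges pointwise on a set with an accumulation point must converge locally uniformly. Martingale convergence provides the pointwise convergence almost surely, so the heart of the matter is upgrading the in-expectation $L_2$-bound to an almost sure locally uniform bound.

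First, I would fix $R > 0$ small enough that (i) $C_n(z) \neq 0$ on $\{|z - 1| \leq R\}$ for every $n$, so that $M_n$ is holomorphic there (the zeros of $C_n$ lie on the non-positive real axis by the product formula implicit in the recursion before \eqref{cn1}, so any $R < 1$ will do), and (ii) $K := \sup_n \sup_{|z-1| \leq R} \E{|M_n(z)|^2} < \infty$. For each $z_k$ in a countable dense subset of $\{|z - 1| < R\}$, $L_2$-bounded martingale convergence gives $M_n(z_k) \to M(z_k)$ almost surely, and a countable union yields simultaneous convergence at all $z_k$ on an event $\Omega_0$ of full probability.

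For the local uniform bound I would use that $|M_n(\cdot)|^2$ is subharmonic. Doob's maximal inequality for the nonnegative submartingale $|M_n(z)|^2$ gives $\E{\sup_n |M_n(z)|^2} \leq 4K$ for every $z$ in the closed disk, so by Fubini
\[
\E{\int_{|w - 1| < R} \sup_n |M_n(w)|^2 \, dw} \leq 4K \pi R^2 < \infty,
\]
and almost surely the random function $f(w) := \sup_n |M_n(w)|^2$ lies in $L_1(\{|w-1| < R\})$. For any $r < R$, any $z$ with $|z - 1| \leq r$, and $\rho = R - r$, the mean-value inequality for subharmonic functions yields
\[
|M_n(z)|^2 \leq \frac{1}{\pi \rho^2} \int_{|w - z| < \rho} |M_n(w)|^2 \, dw \leq \frac{1}{\pi \rho^2} \int_{|w - 1| < R} f(w)\, dw,
\]
a random (but finite) constant uniform in $n$ and in $z$ on the smaller closed disk. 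Intersecting this event with $\Omega_0$ and invoking Vitali's theorem then yields uniform almost sure convergence of $M_n$ on $\{|z - 1| \leq r\}$ to a holomorphic limit $M(z)$.

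The main obstacle is exactly the local uniform boundedness step. A direct Borel--Cantelli strategy based on summability of $\sup_{|z-1| \leq r} |M_{n+1}(z) - M_n(z)|$ in $n$ looks delicate, since $L_2$-boundedness of the martingale only makes the $L_2$-norms of the increments square-summable, not absolutely summable; the Doob--Fubini--subharmonicity detour sidesteps this by converting the in-expectation $L_2$-control at each point cleanly into an almost sure uniform bound on a slightly smaller disk, which is all that Vitali's theorem needs.
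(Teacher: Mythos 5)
Your argument is correct and takes a genuinely different route from the paper's. The paper establishes Proposition \ref{prop2} purely by citation: Corollary 3 in Katona \cite{katona} (extending the Chauvin--Drmota--Jabbour-Hattab second-moment and Neveu vectorial-martingale approach to linear recursive trees with $\beta \in [0,\infty)$, $m=1$) combined with Theorem 5.5 in Schopp \cite{schoppprofile} (extending the Chauvin--Klein--Marckert--Rouault continuous-time embedding and Biggins' uniform-convergence result to the $m$-ary and integer-$\beta$ cases). You instead give a self-contained, elementary derivation starting from the one quantitative fact the paper records in passing, the uniform $L_2$-bound on $M_n(z)$ near $z=1$: Doob's $L_2$-maximal inequality lifts it to $\E{\sup_n |M_n(z)|^2}\leq 4K$ pointwise, Tonelli and the sub-mean-value inequality for the subharmonic function $|M_n(\cdot)|^2$ convert that into an almost sure locally uniform bound, and Vitali's theorem together with pointwise martingale convergence on a countable dense set closes the argument. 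The details check out: the zeros of $C_n$ lie on the non-positive real axis, so $M_n$ is holomorphic on $\{|z-1|<1\}$; $|M_n(z)|^2$ is a nonnegative submartingale for each fixed $z$ since $M_n(z)$ is a complex-valued martingale; and joint measurability of $(z,\omega)\mapsto \sup_n|M_n(z)(\omega)|^2$ justifies the Tonelli step. What the cited literature results deliver beyond this is the \emph{optimal} domain in $\C$ on which the convergence holds, which is not needed for this proposition; what your route buys is a short, transparent argument that treats all parameter ranges of the model at once without the continuous-time embedding. The one input you still take from the literature, as the paper itself does, is the uniform $L_2$-bound on $M_n(z)$ near $z=1$.
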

\begin{prop} \label{prop3}
Let either $\beta \in \N_0, m =1$ or $\beta = -1, 1 < m \in \N$. Then, for any $p > 0$, the sequence $M_n(z)$ is uniformly bounded in $L_p$ in a neighborhood of $z = 1$. In other words, there exists $ \varepsilon > 0$ and $C > 0$ such that, for all $z \in \C, |z-1| < \varepsilon, n \geq 0,$ we have $\E{|M_n(z)|^p} \leq C$. The same is true for all derivatives of $M_n(z)$.
\end{prop}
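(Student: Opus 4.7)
The plan is to bootstrap from the uniform $L_2$-bound on $M_n(z)$ (known for all trees in our model, as recalled in the discussion preceding Proposition \ref{prop2}) to uniform $L_p$-boundedness by induction on the order of the moment, using an explicit martingale increment formula and the super-polynomial tail estimate for $D_n$ recalled just after \eqref{tail2}. The latter is the point at which the integer-$\beta$ restriction enters.

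A short calculation from $W_n(z) - W_{n-1}(z) = z^{D_n}(\beta + mz)$ and the recursion for $C_n(z)$ produces the mean-zero martingale increment
$$
\Delta M_n(z) := M_n(z) - M_{n-1}(z) = \frac{\beta + mz}{C_n(z)}\left( z^{D_n} - \frac{W_{n-1}(z)}{\alpha_{n-1}}\right).
$$
For $|z - 1| < \delta$, the $\Gamma$-function representation of $C_n(z)$ yields $|C_n(z)| \asymp n^{1 + O(\delta)}$, while the exponential Chernoff-type estimate obtained from \eqref{taildepth} gives $\E{|z|^{r D_n}} \lesssim n^{O(r\delta)}$ for fixed $r$. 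Combining these with $|W_{n-1}(z)| \leq W_{n-1}(|z|)$ delivers the key conditional estimate
$$
\E{|\Delta M_n(z)|^r \mid \cF_{n-1}} \leq \frac{C_r}{|C_n(z)|^r}\left( \frac{W_{n-1}(|z|^r)}{\alpha_{n-1}} + \left|\frac{W_{n-1}(z)}{\alpha_{n-1}}\right|^r\right),
$$
reducing everything to moment bounds on $M_{n-1}$ evaluated at auxiliary real points $|z|^r$ close to $1$.

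The induction on $q \geq 1$ targets even moments $p = 2q$. Expanding $|M_n|^{2q} = M_n^q \overline{M_n}^q$ via the binomial theorem and using $\E{\Delta M_n(z) \mid \cF_{n-1}} = 0$ yields
$$
\E{|M_n(z)|^{2q}} - \E{|M_{n-1}(z)|^{2q}} \leq C_q \sum_{s=2}^{2q} \E{|M_{n-1}(z)|^{2q-s}\,\E{|\Delta M_n(z)|^s \mid \cF_{n-1}}}.
$$
Each summand is handled by H{\"o}lder's inequality, invoking the inductive hypothesis at order $\leq 2q-2$ together with $L^p$-bounds on the positive martingale $M_n(x)$ at real $x$ close to $1$. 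The real-argument case can be treated independently since $M_n(x)$ is a positive martingale, so Doob's $L^p$-inequality reduces $L^p$-boundedness to the finiteness of $\E{M_\infty(x)^p}$, which in turn follows from the super-polynomial tail of $D_n$ available only for integer $\beta$. Choosing $\delta$ small enough to make the resulting series summable closes the induction, and Jensen's inequality extends the conclusion to arbitrary real $p > 0$.

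For the derivatives, holomorphy of $M_n$ on $\C^+$ allows Cauchy's integral formula on a circle of radius $r$ contained in the $L_p$-boundedness region: together with Jensen's inequality,
$$
\E{|M_n^{(j)}(z_0)|^p} \leq \frac{(j!)^p}{r^{jp}} \cdot \frac{1}{2\pi r} \int_{|w - z_0| = r} \E{|M_n(w)|^p}\, |dw|,
$$
which is uniformly bounded on a slightly smaller neighborhood. The hard part will be the inductive step: the conditional moment estimates couple the complex argument $z$ to the real arguments $|z|, |z|^2, \ldots$, so the real-argument case must be carried in parallel, and tracking how the $n^{O(\delta)}$ factors accumulate without spoiling summability is where the super-polynomial tail bound on $D_n$ --- and thus the integer-$\beta$ hypothesis --- becomes essential.
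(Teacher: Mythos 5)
Your approach is genuinely different from the paper's. The paper embeds $T_n$ into a continuous-time branching random walk, writes $\mathcal M_{\tau_n}(z) = H_n(z) M_{n+1}(z)$ with $H_n(z)$ independent of $M_{n+1}(z)$ and of unit mean, borrows uniform $L_p$-boundedness of the continuous-time Biggins martingale from Gr\"ubel--Kabluchko, and then transfers it back to $M_n(z)$ via Jensen. Your proposal instead stays in discrete time, writes the explicit increment $\Delta M_n(z) = \tfrac{\beta + mz}{C_n(z)}\bigl(z^{D_n} - W_{n-1}(z)/\alpha_{n-1}\bigr)$, and bootstraps $L_{2q}$-boundedness from lower moments via a martingale-increment expansion and H\"older. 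The increment identity and the conditional moment bound are correct, and the growth estimate $|C_n(z)| \asymp n^{1 + O(\delta)}$ for $|z-1| < \delta$ is right, so the outline is not implausible; but it is substantially more intricate than the paper's route and two steps in your write-up are not sound as stated.

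First, the ``independent'' treatment of the real-argument case via Doob's $L^p$-inequality is circular. Doob's maximal inequality bounds $\E{\sup_k M_k(x)^p}$ \emph{given} $L^p$-boundedness (equivalently, given $\E{M_\infty(x)^p} < \infty$ with $M_n(x) = \E{M_\infty(x) \mid \mathcal F_n}$); it does not produce $\E{M_\infty(x)^p} < \infty$, which is precisely what you need to establish. This is exactly the quantity the paper obtains from \cite{grza}. So the real-argument case is not handled ``independently'' — it has to be folded into the same bootstrap, by letting the induction statement cover a shrinking neighborhood of $1$ containing both $z$ and the auxiliary real points $|z|^s$, $2 \leq s \leq 2q$, which you mention only in passing. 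Second, your claimed source of the integer-$\beta$ restriction is wrong: the Chernoff bound \eqref{taildepth} for $D_n$ follows from the Bernoulli decomposition \eqref{ding2} and holds for \emph{all} $\beta$ in the model; the restriction to integer $\beta$ that the paper flags concerns the super-polynomial tail of the path length $P_n$, a different quantity. As written, nothing in your argument actually uses integer $\beta$, which should give you pause: the paper explicitly says it cannot establish Proposition~\ref{prop3} for non-integer $\beta$, so either your argument has a hidden gap, or you need to locate precisely where the bootstrap fails to close (my guess is that carefully chasing the shrinking radii $\delta_q$ against the $n^{O(s\delta_q)}$ factors is where the trouble lives, not the tail of $D_n$). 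Until that point is identified and resolved, the proposal should be regarded as incomplete.
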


There are two different approaches in the literature towards uniform almost sure convergence of $M_n$, both of which were established initially in the BST. First, the work of Chauvin, Drmota and Jabbour-Hattab \cite{chdrja} is based on explicit calculations involving the covariance function of $W_n(z)$ and uses a limit theorem for vectorial martingales due to Neveu \cite{neveu}. Katona \cite{katona} generalized the methodology to 
the class of linear recursive trees considered here where $\beta \in [0, \infty), m = 1$. (To be precise, he considers a model where a node $v$ is chosen as parent of the new node to be inserted with probability $d_v + 1 + \alpha$, $\alpha > -1$. This is equivalent to our model when $\beta = (1+\alpha)^{-1}$.)
The second approach goes back to Chauvin et al. \cite{chklmaro} and relies on an embedding of $T_n$ into continuous time. It improves on the results in \cite{chdrja} providing optimal ranges of the complex plane where $M_n(z)$ converges uniformly almost surely and in $L_p, p > 1$. It is based on Biggins' uniform convergence results in branching random walks \cite{biggins}.
Schopp \cite{schoppprofile} generalized the approach to a wider class of trees covering $m$-ary trees and linear recursive trees with $\beta$ integer. Summarizing, Proposition \ref{prop2} follows from Corollary 3 in \cite{katona} together with Theorem 5.5 in \cite{schoppprofile}.

\medskip In order to prove Proposition \ref{prop3}, we  recall the details about the embedding of $T_n$ into a continuous-time model. We only treat the case $\beta \in \N_0, m = 1$, the case of $\beta = -1$ being easier. Consider a continuous-time branching random walk describing the evolution of a population where, for any $t \geq 0$, each individual is assigned a position on the positive real line  with the following dynamics.
At time $t = 0$, there is one alive individual at position $x = 0$. 
Individuals do not move throughout their lifetimes and die at unit rate independently of each other. Extinction of an individual at position $x \geq 0$, instantaneously gives rise to the birth of  $\beta +2$ new individuals, $\beta + 1$ of which at position $x$, and one at position $x +1$. 
For $t \geq 0, k \in \N_0$, denote by $\varrho_t(k)$ the number of alive individuals at position $k$. For $n \geq 1$, denote by $\tau_n$ the time of the $n$-th death event. We abbreviate $\tau_0 = 0$. At time $t$ with $\tau_n \leq t < \tau_{n+1}$, we have $\alpha_{n+1}$ alive individuals in the population, where $\alpha_n$ is defined in \eqref{defa}.
 Since any alive individual is equally likely to die next, for all $n \geq 0$, alive individuals at position $k \geq 0$ in the branching random walk at time $\tau_n$ can be identified with external nodes on level $k+1$ in the tree $T_{n+1}$. More formally, recalling \eqref{defex}, we have $(\varrho_{\tau_n}(k), k \geq 0) = (U_{k+1}(n+1), k \geq 0)$, where equality can be understood in an almost sure sense even for the sequences indexed by $n \geq 0$ upon choosing suitable versions of the processes. For our purposes, it is sufficient that equality holds in distribution for fixed $n$.
Analogously to the profile polynomial, for $z \in \C^+$,  we define
$$\mathcal W_t(z) = \sum_{k \geq 0} \varrho_t(k) z^{k}, \quad \mathcal M_t(z) = \frac{\mathcal W_t(z)}{\E{\mathcal W_t(z)}}. $$
Let $\mathcal F^*_t, t > 0$ be the $\sigma$-field containing the information of the branching random walk up to time $t$. Then, it is well-known that $\mathcal M_t(z)$ is a martingale, often referred to as Biggins' martingale \cite[Section 5]{biggins}. (Note that, the process $\mathcal M_t(z)$ equals $W^{(t)}(-\text{Log z})$ in the notation of \cite{biggins}, where 
$\text{Log z}$ denotes the principal value of the logarithm of $z$.) Moreover, setting $\mathcal C_t(z) = \E{\mathcal W_t(z)}$, we have
\begin{align} \label{eq20}
\mathcal M_{\tau_n}(z) = \frac 1 z \frac{C_{n+1}(z)}{\mathcal C_{\tau_n}(z)} M_{n+1}(z),
\end{align}
where $\mathcal C_{\tau_n}(z)$ and $M_{n+1}(z)$ are independent. This follows since the skeleton of $\mathcal M_t(z)$ is independent of the jump times $\tau_n, n \geq 1$. Again, equality can be understood either on an almost sure level or in distribution. 

\begin{proof} [Proof of Proposition \ref{prop3}]

From the continuous-time analogue of Proposition 6.1 in \cite{grza}, compare Remark 6.2 there, we know that, for any $p > 0$,  there exists a neighborhood $O$ of $z = 1$ and a constant $C_1 > 0$ such that $\E{\mathcal |\mathcal M_t(z)|^p} \leq C_1$ for all $t \geq 0, z \in O$. In particular, for $z \in O$, the sequence $\mathcal M_t(z), t > 0$ is uniformly integrable. Writing $\mathcal M_\infty(z)$ for the almost sure limit, we have $ \mathcal M_{\tau_n}(z) = \E{\mathcal M_\infty(z) | \mathcal F^*_{\tau_n}}$ almost surely by the theorem of optional sampling \cite[Theorem 77.5]{rogwil}. Using the triangle and Jensen's inequality, we deduce that, for any $z \in O$ and $n \geq 1$, 
\begin{align*} \E{|\mathcal M_{\tau_n}(z)|^p} = \E{ \left| \E{\mathcal M_{\infty}(z)| \mathcal F^*_{\tau_n}} \right|^p}  \leq \E{|\mathcal M_{\infty}(z)|^p} = \lim_{t \to \infty} \E{|\mathcal M_{t}(z)|^p}  \leq C_1.  \end{align*}

Let $H_n(z) := C_{n+1}(z) / (z\mathcal{C}_{\tau_n}(z))$ and observe that, by independence and since both $M_{\tau_n}(z)$ and $M_{n+1}(z)$ have unit mean, \eqref{eq20} implies $\E{H_n(z)} = 1$ for all $z \in O$. For $p \geq 1$, by Jensen's and the triangle inequality, we obtain $\E{|H_n(z)|^p} \geq |\E{H_n(z)}|^p = 1$. Hence, again using  \eqref{eq20}, for $z \in O$, we obtain $\E{|M_{n+1}(z)|^p} \leq \E{|\mathcal M_{\tau_n}(z)|^p} \leq C_1$. Thus, for any $p > 0$, $M_n(z)$ is bounded uniformly in $L^p$ in a neighborhood of $z = 1$.
The result transfers to the derivatives of $M_n(z)$ by a standard application of Cauchy's integral formulae, compare, e.g.\ the proof of Proposition 5.8 in \cite{grza}. (Note that, as worked out in \cite[Section 5]{schoppprofile}, for any $z \in \mathbb C^+$, the sequence $H_n(z)$ is a martingale with respect to the filtration $( \mathcal F^*_{\tau_n})$ that converges almost surely. For fixed $z \in \mathbb C^+$, its limit is distributed like a multiple of a complex power of a random variable with a Gamma distribution. None of these deeper results are needed here.) 
\end{proof}

We decided to prove the proposition with the help of martingale theory in order to stay in the framework of this work. An alternative proof could be given relying on a distributional recurrence for the sequence $W_n(z)$. This approach was taken by Drmota, Janson and Neininger \cite{drjane} studying the profile of $m$-ary search trees including the case of BSTs. As for the martingale approach, this methodology can be worked out for any $ \beta \in \N_0$ or $\beta = -1$.

\medskip In order to prove Theorem 1 for non-integer valued $\beta$, the following (presumably non-optimal) result is sufficient. 
\begin{prop} \label{prop1}
For all $p \in \N, \varepsilon > 0$, we have $\E{|S_n|^p} = O(n^{\varepsilon})$.
\end{prop}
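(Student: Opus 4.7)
The plan is to deduce the moment bound directly from the Bernstein-type tail estimate \eqref{tail2} for $P_n - \E{P_n}$, combined with the layer-cake identity. Setting $a_n := n - \beta/(\beta + m)$, we have $a_n \asymp n$, and the event $\{|S_n| \geq s\}$ coincides with $\{|P_n - \E{P_n}| \geq s\, a_n\}$. Substituting $t = s\, a_n$ into \eqref{tail2} and using $\E{D_n} = O(\log n)$ from \eqref{ding2}, one obtains, for all $s > 0$, all sufficiently large $n$, and some constant $c = c(\beta, m) > 0$,
$$\Prob{|S_n| \geq s} \leq 2n \exp\!\left(-c\, \min\!\left(\frac{s^2}{\log n},\; s\right)\right),$$
using $s^2/(2\log n + s) \geq \tfrac{1}{2}\min(s^2/\log n, s)$.

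From here I would split the moment integral $\E{|S_n|^p} = \int_0^\infty p s^{p-1} \Prob{|S_n| \geq s}\, ds$ at the threshold $s_0 := K \log n$, with $K > 1/c$ to be fixed. The contribution from $[0, s_0]$ is at most $s_0^p = O((\log n)^p)$. On $(s_0, \infty)$ the exponent is in its linear regime, so the tail portion is bounded by
$$2n \int_{s_0}^\infty p s^{p-1} e^{-c s}\, ds = O\!\left(n (\log n)^{p-1} e^{-c K \log n}\right) = O\!\left((\log n)^{p-1} n^{1 - cK}\right),$$
which tends to $0$ by the choice of $K$. Summing both parts yields $\E{|S_n|^p} = O((\log n)^p)$, which is $O(n^\varepsilon)$ for any $\varepsilon > 0$.

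The only mildly delicate point is bookkeeping the two regimes of the Bernstein exponent $s^2/(\log n + s)$ and choosing the splitting threshold at least logarithmic in $n$ so that the prefactor $2n$ in \eqref{tail2} is absorbed by the exponential decay. Since the target bound is very loose (any polynomial slack $n^\varepsilon$ suffices), no deeper argument is needed; indeed this strategy delivers the considerably stronger estimate $\E{|S_n|^p} = O((\log n)^p)$, reflecting the fact that the true tails of $P_n - \E{P_n}$ are much thinner than the polynomial control provided by \eqref{tail2}.
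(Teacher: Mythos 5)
Your proof is correct but takes a genuinely different route from the paper's. The paper proceeds by a double induction: first on the (even) moment order $p$, reducing to the bound at order $p-2$, then on $n$, exploiting the one-step recursion $E_n = E_{n-1} + (\beta+m)D_n + m$ from \eqref{ding1} together with H\"older's inequality and the $D_n$ moment bounds \eqref{moments:Dn}. You instead convert the Bernstein-type tail bound \eqref{tail2} directly into moment estimates via the layer-cake formula, splitting the integral at a logarithmic threshold. Both arguments apply to all $\beta$ in the model, without any integrality restriction. Your approach is shorter and more elementary, and it yields the strictly sharper conclusion $\E{|S_n|^p} = O\bigl((\log n)^p\bigr)$; the barrier here is precisely the crude prefactor $2n$ in \eqref{tail2}, which your threshold $s_0 \asymp \log n$ is designed to absorb. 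The paper's inductive moment calculation is more in keeping with the martingale bookkeeping elsewhere in Section 2 but only delivers $O(n^\varepsilon)$. One minor numerical slip in your write-up: the intermediate inequality $s^2/(2\log n + s) \geq \tfrac12 \min(s^2/\log n, s)$ is false for $s \leq \log n$ (it would force $s \leq 0$); the correct constant is $\tfrac13$, which is in any case absorbed into $c(\beta,m)$, so the argument is unaffected.
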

\begin{proof}
By Jensen's inequality it is sufficient to consider even values of $p$. We proceed by induction. 
Let $p \geq 4$ and assume that, for some $C_1 > 0$, we have $\E{(E_{n} - \E{E_{n}})^{p-2}} \leq C_1 n^{p-2 + \varepsilon}$ for all $n \geq 1$.
By \eqref{ding1}, we have
\begin{align}
\E{(E_n - \E{E_n})^p} & = \E{(E_{n-1} - \E{E_{n-1}})^p} + p \E{(E_{n-1} - \E{E_{n-1}})^{p-1} (D_n - \E{D_{n})}} (\beta + m) \nonumber \\
& \: \: \: \: + \sum_{k=2}^p {p \choose k} \E{(E_{n-1} - \E{E_{n-1}})^{p-k} (D_n - \E{D_{n}})^k} (\beta + m)^k\label{eq7}
\end{align}
From the representation \eqref{ding2} for $D_n$, it is easy to see that, for all $p > 0$, we have
\begin{align} \label{moments:Dn}
\E{|D_n - \E{D_n}|^p} = O( (\log n)^p).
\end{align}
Thus, by an application of H{\"o}lder's inequality, there exist  constants $C_2, C_3 $ such that
\begin{align*} \frac{p(p-1)(\beta + m)^2}{2} &  \E{(E_{n-1} - \E{E_{n-1}})^{p-2}  (D_n - \E{D_{n}})^2}  \\ & \leq C_2  \E{(E_{n-1} - \E{E_{n-1}})^{p}}^{(p-2) / p} (\log n)^2 \end{align*}
and 
\begin{align*}
\sum_{k=3}^p {p \choose k} & \E{(E_{n-1} - \E{E_{n-1}})^{p-k} (D_n - \E{D_{n}})^k} (\beta + m)^k\\
& \leq C_3 \sum_{k=3}^p \E{(E_{n-1} - \E{E_{n-1}})^{p-2}}^{(p-k)/(p-2)} (\log n)^k.
\end{align*}
Using \eqref{ding1} to simplify the second summand in \eqref{eq7}, the bounds obtained above and the induction hypothesis, we summarize
\begin{align*}
\E{(E_n - \E{E_n})^p} & \leq \left(1 + \frac{p(\beta + m)}{\alpha_{n-1}}\right) \E{(E_{n-1} - \E{E_{n-1}})^p}  \\ 
& + C_2  \E{(E_{n-1} - \E{E_{n-1}})^{p}}^{(p-2) / p} (\log n)^2+ C_1 C_3 \sum_{k=3}^p n^{p-k + \varepsilon} (\log n)^k.
\end{align*}
We now proceed by induction over $n$. Assume that $ \E{(E_{i} - \E{E_{i}})^p} \leq C_4 i^{p + \varepsilon}$ for some constant $C_4 \geq 1$ and all $i \leq n-1$.
Then, for some constant $C_5$ depending only on $\beta, m, p, \varepsilon$ but not on $C_4$ or $n$, 
\begin{align*}
\E{(E_n - \E{E_n})^p} & \leq C_4 n^{p + \varepsilon} \left( \left(1 + \frac{p(\beta + m)}{\alpha_{n-1}}\right)\left( 1- \frac{1}{n}\right)^{p+\varepsilon} + C_2 n^{-2}(\log n)^2 \right. \\
& \: \: \: \: + C_1 C_3  p n^{-3}(\log n)^p\Bigg)  \\
& \leq C_4 n^{p + \varepsilon}  \left( 1 + \frac{p(\beta + m)}{\alpha_{n-1}} - \frac {p + \varepsilon}{n}  + C_5 n^{-2} (\log n)^2\right). 
\end{align*}
The bracket on the  right hand side of the last display does not exceed one for all $n$ large enough. This concludes the proof.
\end{proof}
The proposition leads to a bound on the moments of the martingale differences $X_n$.
For any $p \geq 2$, by \eqref{X1}, using $(a + b)^p \leq 2^p (a^p + b^p)$ for $a, b \geq 0$, we have
\begin{align*}
|X_n|^p \leq \frac{2^p (\beta + m)^p}{\alpha_n^p} \left( |S_{n-1}|^p +  |D_{n} - \E{D_{n}}|^{p} \right).
\end{align*}
Now from Proposition \ref{prop1} and \eqref{moments:Dn}, for any $\varepsilon > 0$, we deduce 
\begin{align} \label{bound_moments} \E{|X_n|^p} =  O\left(n^{\varepsilon - p} \right). \end{align}

\subsection{Verifications of the conditions in Proposition \ref{prop:mart}}
We start with the conditions on the moments.  For any $\delta> 0, 0 < \varepsilon <1$, using \eqref{bound_moments}, for some $C > 0$, we have
\begin{align} \label{L1}
\sum_{i=1}^\infty s_i^{-4} \E{X_i^4 \I{|X_i| \leq \delta s_i}} \leq \sum_{i=1}^\infty s_i^{-4} \E{X_i^4} \leq C \sum_{i=1}^\infty i^{\varepsilon - 2} < \infty.
\end{align}
This shows \textbf{L2}. In the same way, one verifies \textbf{P2}.
In the verification of conditions \textbf{C1} and \textbf{L1} we make use of the tail bounds \eqref{taildepth} and \eqref{tail2}.  Let $\varepsilon > 0$. First, by the Cauchy-Schwarz inequality, 
\begin{align*}
s_n^{-2} \sum_{i=n}^\infty  \E{X_i^2 \I{|X_i| \geq \varepsilon s_n}}  \leq s_n^{-2} \sum_{i=1}^\infty  \left(\E{X_i^4}\right)^{1/2} \left(\Prob{|X_i| \geq  \varepsilon s_n}\right)^{1/2}
\end{align*}
Using \eqref{bound_moments}, \eqref{taildepth} and\eqref{tail2}, it is easy to see that there exist constants $c, C > 0$ such that
\begin{align*}
s_n^{-2} \sum_{i=n}^\infty  \E{X_i^2 \I{|X_i| \geq \varepsilon s_n}} & \leq C \frac{\log n}{n} \sum_{i=n}^\infty   e^{-c i \sqrt{(\log n) / n}} \nonumber \\ 
& \leq C \frac{\log n}{n} \left( \int_{x=n}^\infty   e^{-c x \sqrt{(\log n) / n}} dx + e^{-c \sqrt{n \log n}}\right) \\
& = O\left( \sqrt{\frac{\log n}{n}} e^{-c \sqrt{n\log n}}\right).
\end{align*}
This proves condition $\textbf{C1}$. The verification of $\textbf{L1}$ runs along similar lines.
\noindent In order to prove \textbf{L3}, note that, by Proposition IV-6-1 in Neveu \cite{neveuenglish},  the series converges almost surely if  \begin{align} \label{helpL2}
\sum_{i=1}^\infty s_i^{-4} \E{ (X_i^2 - \E{X_i^2| \mathcal{F}_{i-1}})^2} < \infty. \end{align}
By Jensen's inequality, 
\begin{align*}
\E{ (X_n^2 - \E{X_n^2| \mathcal{F}_{n-1}})^2} & \leq \E{X_n^4} + \E{\E{X_n^2 | \mathcal{F}_{n-1}}^2}  \leq 2 \E{X_n^4}. \end{align*}
Thus, \eqref{helpL2} follows from \eqref{L1} proving \textbf{L3}. 

We move on to condition \textbf{L4} also covering \textbf{C2} proving the martingale central limit theorem and the law of the iterated logarithm.
To this end, by \eqref{X1},
\begin{align}
\frac{\alpha_n^2}{(\beta + m)^{2}} \E{X_n^2 | \mathcal{F}_{n-1}} & = \frac{\E{(E_{n-1} - \alpha_{n-1}D_n)^2 | \mathcal{F}_{n-1}}}{\alpha_{n-1}^2}  \nonumber\\
& = \frac{\alpha_{n-1}^2 \E{D_n^2 | \mathcal{F}_{n-1}}- E^2_{n-1} }{\alpha_{n-1}^2  } \nonumber \\
& =  \E{D_n^2 | \mathcal{F}_{n-1}} - \E{D_n}^2 - 2 S_{n-1} \E{D_n} - S_{n-1}^2. \label{xxx}
\end{align}
In order to analyze this expression, we make use of the profile polynomial defined in \eqref{defWW}.
By the product rule, 
\begin{align*}
M_n''(z) & 
= \frac{(W_n''(z) C_n(z)  - C_n''(z) W_n(z))C_n(z) - 2 C_n'(z)(W_n'(z) C_n(z) - C_n'(z) W_n(z))}{C_n^3(z)}.
\end{align*}
Using \eqref{wn1} and \eqref{cn1}, we deduce
\begin{align*}
M_n''(1) &  = \E{D_{n+1}^2 | \mathcal{F}_n}  - \E{D_{n+1}^2}  -  \frac{(2 \mu_n +\alpha_n )(E_n - \mu_n)}{\alpha_n^2} \\
& = \E{D_{n+1}^2 | \mathcal{F}_n}  - \E{D_{n+1}}^2 - \text{Var}(D_{n+1}) - 2 \E{D_{n+1}} S_n - S_n. 
\end{align*}
Together with \eqref{xxx}, it follows
\begin{align} \label{conn}
(\beta + m)^{-2} \alpha_n^2\E{X_n^2 | \mathcal{F}_{n-1}} & =  \text{Var}(D_n) + M_{n-1}''(1) + S_{n-1} - S_{n-1}^2.
\end{align}
Hence, recalling \eqref{approx}, we have 
\begin{align*}
s_n^{-2} & \sum_{i = n}^\infty \E{X_i^2 | \mathcal{F}_{i-1}}  \\
& = \left( \frac{(\beta + m)^3}{m} \frac{n}{\log n} + O \left( \frac{n}{\log^2 n} \right) \right) \sum_{i=n}^\infty \frac{ \frac{m}{\beta + m}  \log i + M_{i-1}''(1) +  S_{i-1} - S_{i-1}^2}{\alpha_i^2}  \to 1,
\end{align*}
almost surely, by computing the series as in \eqref{evaint}. Here, we use that, almost surely,  $S_n \to S$ and $M''_{n}(1) \to M''(1)$ by Weierstrass' convergence theorem for holomorphic functions recalling Proposition \ref{prop2}.
This verifies condition \textbf{L4} and finishes the proof of Theorem \ref{thm_main}. In order to conclude the proof of Theorem \ref{thm:moments} note that condition \textbf{P1} follows immediately from Proposition \ref{prop3} and
$S_n = M_n'(1)$. For a real-valued random variable $Y$ and $p \geq 1$, denote by $\| Y \|_p := \E{|Y|^p}^{1/p}, p > 1$ the $L_p$-norm $Y$. For $p \geq 2$, we have 
\begin{align*}
& \left \| (\beta + m)^{-2} \sum_{i = n}^\infty \E{X_i^2 | \mathcal{F}_{i-1}} \right\|_p  \\
& \: \leq \sum_{i = n}^\infty \frac{\text{Var}(D_i)} {\alpha_i^2} + \sum_{i = n}^\infty \frac{ \| M_{i-1}''(1) \|_p +  \|S_{i-1}\|_p + \|S_{i-1}^2\|_p}{\alpha_i^2},
\end{align*}
where we applied the Minkowski inequality. The term on the right hand side is bounded from above by $C (\log n) / n$ for some $C > 0$ by Proposition \ref{prop3} and \eqref{ding2}. Together with \eqref{approx}, this implies  \textbf{P3} and concludes the proof.

\small
\setlength{\bibsep}{0.3em}

\end{document}